\newtheorem{prop}{Proposition}[section]
\newtheorem{lemma}{Lemma}[section]
\newtheorem{thm}{Theorem}[section]
\newtheorem{cor}{Corollary}[section]
\newtheorem{rem}{Remark}[section]
\newcommand{\w}{\varpi}
\renewcommand{\l}{\lambda}
\newcommand{\tr}{\mathrm{Tr}}
\renewcommand{\det}{\mathrm{det}}
\newcommand{\diag}{\mathrm{diag}}
\newcommand{\e}{\epsilon}
\newcommand{\ve}{\varepsilon}
\newcommand{\C}{\mathbb{C}}
\renewcommand{\d}{\delta}
\newcommand{\BC}{\mathrm{BC}}
\newcommand{\AI}{\mathrm{AI}}
\newcommand{\Hom}{\mathrm{Hom}}
\newcommand{\Res}{\mathrm{Res}}
\newcommand{\Rep}{\mathrm{Rep}}
\renewcommand{\tr}{\mathrm{tr}}
\newcommand{\GL}{\mathrm{GL}}
\newcommand{\LLC}{\mathrm{LLC}}
\newcommand{\SL}{\mathrm{SL}}
\newcommand{\Ind}{\mathrm{Ind}}
\newcommand{\M}{\mathcal{M}}
\newcommand{\1}{\mathbf{1}}
\title{Local distinction, quadratic base change and automorphic induction for $\GL_n$}
\author{N. Matringe}
\begin{document}

\maketitle

\begin{abstract}
Behind this sophisticated title hides an elementary exercise on Clifford theory for index two subgroups and self-dual/conjugate-dual representations. 
When applied to semi-simple representations of the Weil-Deligne group $W'_F$ of a non Archimedean local field $F$, and further translated in terms of representations of $\GL_n(F)$ via the local Langlands correspondence when $F$ has characteristic zero, it yields various statements concerning the behaviour of different types of distinction under quadratic base change and automorphic induction. When $F$ has residual characteristic different from $2$, combining of one of the simple results that we obtain with the tiviality of conjugate-orthogonal root numbers (\cite{GGP}), we recover without using the LLC a result of Serre on the parity of the Artin conductor of orthogonal representations of $W'_F$ (\cite{Serre}). On the other hand we discuss its parity for symplectic representations using the LLC and the Prasad and Takloo-Bighash conjecture. 
\end{abstract}

\section*{Introduction}

Let $E/F$ be a separable quadratic extension of non Archimedean local fields. Then thanks to the known local Langlands correspondence for $\GL_n(E)$ and $\GL_n(F)$, one has a base change map $\BC_F^E$ from the set of isomorphism classes of irreducible representations of $\GL_n(F)$ to 
that of $\GL_n(E)$, and an automorphic induction map $\AI_E^F$ from the set or isomorphism classes of irreducible representations of $\GL_n(E)$ to 
that of $\GL_{2n}(F)$. A typical statement proved in this note (for $F$ of characteristic zero) is that if $\pi$ is a generic unitary representation of $\GL_n(F)$ with orthogonal Langlands parameter (orthogonal in short), then 
$\BC_F^E(\pi)$ is orthogonal and $\GL_n(F)$-distinguished, and that the converse holds if $\pi$ is a discrete series (see Corollary 
\ref{cor main} for the general statement). Corollary \ref{cor main} is itself a translation via the LLC of our main result which concerns 
representations of the Weil-Deligne group of $F$ (Proposition \ref{prop main}). Another lucky application of Proposition \ref{prop main} is that the result of \cite{Serre} on the parity of Artin conductors of representations of the Weil-Deligne group of $F$ is a consequence of that in \cite{Deligne} on root numbers of orthogonal representations, when $F$ has odd residual characteristic, as we show in Corollary \ref{cor serre}. 
We also discuss its parity for symplectic representations using the LLC and the Prasad and Takloo-Bighash conjecture in Corollary \ref{cor PTB}.\\

\noindent \textbf{Acknowledgement.} The motivation for writing this note is a question of Vincent Sécherre, which it answers. We thank him for asking it. We also thank Eyal Kaplan for useful explanations concerning \cite{Kap} and \cite{Yam}. We are greatful to the referee for his accurate comments and corrections. This work benefited from hospitality of the Erwin-Schrödinger institute, via the Research in Teams Project: l-modular Langlands Quotient Theorem and Applications.

\section{Notation, definitions and basic facts about self-dual and conjugate-dual representations}

For $K$ a non Archimedean local field we denote by $W_K$ the Weil group of $K$ (see \cite{T.79}), and by $W'_K=W_K\times \SL_2(\C)$ the Weil-Deligne group of $K$. By a representation of $W_K$ we mean a finite dimensional smooth complex representation of $W_K$. By a representation of $W'_K$ we mean a representation which is a direct sum of representations of the form $\phi\otimes S$, where $\phi$ is an irreducible representation of $W_K$ and $S$ is an irreducible algebraic representation of $\SL_2(\C)$. We sometimes abbreviate "$\phi$ is a representation of 
$W'_K$" as "$\phi\in \Rep(W'_K)$". We denote by $\phi^\vee\in \Rep(W'_K)$ the dual of $\phi\in \Rep(W'_K)$.\\

For the following facts on self-dual and conjugate-dual representations of $W'_K$, we refer to \cite[Section 3]{GGP}. We recall that a representation $\phi$ of $W'_K$ is self-dual if and only if there exists on $\phi\times \phi$ a $W'_K$-invariant bilinear form $B$ which is non degenerate: we will say that $B$ is \textit{$W'_K$-bilinear} (which in particular means non degenerate). If moreover $B$ is alternate, we say that $B$ is \textit{$(W'_K,-1)$-bilinear} in which case we say that $\phi$ is \textit{symplectic} or \textit{$(-1)$-self-dual}, whereas if $B$ is symmetric, and we say that $B$ is \textit{$(W'_K,1)$-bilinear} in which case we say that $\phi$ is \textit{orthogonal} or \textit{$1$-self-dual}. 
If $\phi$ is irreducible and self-dual, then there is up to nonzero scaling a unique $W'_K$-bilinear form on $\phi\times \phi$, which is either 
$(W'_K,-1)$-bilinear or $(W'_K,1)$-bilinear, but not both.\\

Now suppose that $L/K$ is a separable quadratic extension so that $W_L$ has index two in $W_K$, and fix $s\in W_K-W_L$. For $\phi$ a representation of $W'_L$, we denote by $\phi^s$ the representation of $W'_L$ defined as $\phi^s:=\phi(s \ . \ s^{-1})$. We say that $\phi$ is 
\textit{$L/K$-dual} or \textit{conjugate-dual} if $\phi^s\simeq \phi^\vee$. The representation $\phi\in \Rep(W'_L)$ is conjugate-dual if and only if there is on $\phi\times \phi$ a non-degenerate bilinear form $B$ such that \[B(w.x, sws^{-1}.y)=B(x, y)\] for all 
$(w,x,y)$ in $W'_L\times  \phi\times \phi$. We say that such a bilinear form $B$ is \textit{$L/K$-bilinear} (this in particular means non degenerate). If moreover there is $\ve\in \{\pm 1\}$ such that $B$ satisfies 
\[B(x,s^2.y)=\ve B(y,x)\] for all $(x,y)$ in $\phi\times \phi$ we say that $B$ is \textit{$(L/K,\ve)$-bilinear}, in which case we say that 
$\phi$ is \textit{$(L/K,\ve)$-dual} or \textit{conjugate-symplectic} if $\ve=-1$ and \textit{conjugate-orthogonal} if $\ve=1$. All the definitions above do not depend on the choice of $s$. When $\phi$ is $L/K$-dual and also irreducible, then there is up to nonzero scaling a unique $L/K$-bilinear form on $\phi\times \phi$, which is either 
$(L/K,-1)$-bilinear or $(L/K,1)$-bilinear, but not both.

\section{Preliminary results}

\subsection{Clifford-Mackey theory for index two subgroups}

We refer to \cite[Section 3]{CST} for the following standard results.

\begin{thm}\label{clifford}
Let $G$ be a finite group, $H$ a finite subgroup of index $2$, $s\in G-H$, and let $\eta:G\rightarrow \{\pm 1\}$ be the nontrivial character of $G$ trivial on $H$.
\begin{itemize}
\item For $\phi$ a (finite dimensional complex) representation of $H$ which is irreducible, the representation $\Ind_H^G(\phi)$ is irreducible if and only if $\phi^s\not \simeq \phi$, which is also equivalent to the fact that $\phi$ does not extend to $G$. If it is reducible then $\phi$ extends to $G$, and if 
$\widetilde{\phi}$ is such an extension, then $\eta\otimes \widetilde{\phi}$ is the only other extension different from $\widetilde{\phi}$, and $\Ind_H^G(\phi)\simeq \widetilde{\phi}\oplus (\eta\otimes \widetilde{\phi})$.
\item An irreducible representation $\phi'$ of $G$ restricts to $H$ either irreducibly, or breaks into two irreducible pieces, and the second case occurs if and only if $\phi' \simeq \eta \otimes \phi'$, which is also equivalent to $\phi'=\Ind_H^G(\phi)$ for $\phi$ an irreducible representation of 
$H$ such that $\phi^s \simeq \phi$.
\end{itemize}
\end{thm}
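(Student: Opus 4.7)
The plan is to deduce everything from Mackey's restriction formula $\Res_H^G\Ind_H^G\phi\simeq \phi\oplus \phi^s$ (where $\phi^s(h)=\phi(shs^{-1})$, using that $G=H\sqcup sH$), combined with Frobenius reciprocity and the projection formula $\eta\otimes\Ind_H^G\phi\simeq \Ind_H^G(\eta|_H\otimes \phi)=\Ind_H^G\phi$.

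For the first bullet, I would compute the endomorphism algebra of $\Ind_H^G\phi$ via Frobenius reciprocity:
\[\End_G(\Ind_H^G\phi)\simeq \Hom_H(\phi,\Res_H^G\Ind_H^G\phi)\simeq \Hom_H(\phi,\phi)\oplus \Hom_H(\phi,\phi^s).\]
By Schur, this has dimension $1$ if $\phi^s\not\simeq \phi$ (giving irreducibility) and dimension $2$ otherwise. When $\phi^s\simeq \phi$, the induced representation has length at most $2$ (its commutant is $2$-dimensional and commutative, hence semisimple), and since $\dim \Ind_H^G\phi=2\dim \phi$ a nontrivial decomposition $\Ind_H^G\phi=\widetilde\phi\oplus V$ must have $\dim\widetilde\phi=\dim V=\dim\phi$. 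Frobenius reciprocity gives $\Hom_H(\phi,\widetilde\phi|_H)\neq 0$, so by dimension $\widetilde\phi|_H\simeq \phi$, i.e.\ $\widetilde\phi$ is an extension of $\phi$ to $G$. The other summand is then $\eta\otimes\widetilde\phi$ because $\Ind_H^G\phi\simeq \eta\otimes \Ind_H^G\phi\simeq \widetilde\phi\oplus (\eta\otimes \widetilde\phi)$ and $\eta\otimes\widetilde\phi\not\simeq \widetilde\phi$ (their restrictions agree but they differ as $G$-representations since the character of $G/H\simeq \{\pm 1\}$ by which they differ is nontrivial). Finally, any two extensions of $\phi$ differ by a character of $G/H$, so they are either equal or differ by $\eta$; this both proves uniqueness up to $\eta$ and the equivalence with ``$\phi$ extends to $G$''.

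For the second bullet, pick any irreducible $H$-subrepresentation $\phi$ of $\Res_H^G\phi'$. Frobenius reciprocity yields a nonzero map $\Ind_H^G\phi\to \phi'$, which is surjective by irreducibility of $\phi'$. If $\phi^s\not\simeq \phi$, the first bullet shows $\Ind_H^G\phi$ is already irreducible, so $\phi'\simeq \Ind_H^G\phi$ and $\Res_H^G\phi'\simeq \phi\oplus\phi^s$ breaks into two non-isomorphic irreducibles. If $\phi^s\simeq \phi$, then $\phi'$ is one of the two extensions $\widetilde\phi$ or $\eta\otimes\widetilde\phi$, and $\Res_H^G\phi'\simeq \phi$ is irreducible. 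The equivalence with $\phi'\simeq \eta\otimes\phi'$ follows from the projection formula: in the first case $\eta\otimes\phi'\simeq \eta\otimes \Ind_H^G\phi\simeq \Ind_H^G\phi\simeq \phi'$; in the second case the two extensions $\widetilde\phi$ and $\eta\otimes \widetilde\phi$ are inequivalent, so $\eta\otimes\phi'\not\simeq \phi'$.

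There is no real obstacle: the only slightly subtle step is verifying, when $\Ind_H^G\phi$ is reducible, that a summand of the correct dimension actually restricts to $\phi$ (so is a genuine extension), which is handled by the Frobenius-plus-dimension argument above. The rest is bookkeeping with Mackey, Frobenius reciprocity, and Schur's lemma.
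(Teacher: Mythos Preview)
Your argument is correct and self-contained. The paper does not actually prove this theorem: it is quoted as a standard result with a reference to \cite[Section 3]{CST}, so there is no in-paper proof to compare against. Your Mackey--Frobenius--Schur route is precisely the standard one such a reference points to.

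One small quibble: the parenthetical justification for $\eta\otimes\widetilde\phi\not\simeq\widetilde\phi$ (``they differ by a nontrivial character of $G/H$'') is not a valid argument on its own---an irreducible representation can be isomorphic to its twist by a nontrivial character. The correct reason is already contained in your next sentence: both irreducible summands of $\Ind_H^G\phi$ restrict to $\phi$, and any two extensions of $\phi$ differ by a character of $G/H$; since the two summands are non-isomorphic (the commutant is $\C\times\C$, not $M_2(\C)$), they must be exactly $\widetilde\phi$ and $\eta\otimes\widetilde\phi$. Alternatively, if $\eta\otimes\widetilde\phi\simeq\widetilde\phi$ then an intertwiner commutes with $\phi(H)$, hence is scalar by Schur (as $\widetilde\phi|_H=\phi$ is irreducible), and evaluating at $s$ forces $\widetilde\phi(s)=-\widetilde\phi(s)=0$, a contradiction. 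With this reordering your proof is complete.
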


For $E/F$ a separable quadratic extension of non Archimedean local fields, we denote by 
$\eta_{E/F}:W'_F\rightarrow \{\pm 1\}$ the nontrivial character of $W'_F$ trivial on $W'_E$. Theorem \ref{clifford} has the following corollary.

\begin{cor}\label{clifford2}
Let $E/F$ be a separable quadratic extension of non Archimedean local fields, and fix $s\in W_F-W_E$. 
\begin{itemize}
\item For $\phi_E\in \Rep(W'_E)$ an irreducible representation, the representation $\Ind_{W'_E}^{W'_F}(\phi_E)$ is irreducible if and only if $\phi_E^s\not \simeq \phi_E$, which is also equivalent to the fact that $\phi_E$ does not extend to $W'_F$. If it is reducible then $\phi_E$ extends to $W'_F$, and if 
$\phi_F$ is such an extension, then $\eta_{E/F} \otimes \phi_F$ is the only other extension different from $\phi_F$, and $\Ind_{W'_E}^{W'_F}(\phi_E)\simeq\phi_F\oplus (\eta_{E/F}\otimes \phi_F)$.
\item An irreducible representation $\phi_F$ of $W'_F$ restricts to $W'_E$ either irreducibly, or breaks into two irreducible pieces, and the second case occurs if and only if $\phi_F \simeq \eta_{E/F} \otimes \phi_F$, which is also equivalent to $\phi_F \simeq \Ind_{W'_E}^{W'_F}(\phi_E)$ for $\phi_E$ and irreducible representation of 
$W'_E$ such that $\phi_E^s \simeq \phi_E$.
\end{itemize}
\end{cor}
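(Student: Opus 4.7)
The plan is to reduce Corollary \ref{clifford2} to its finite-group analogue (Theorem \ref{clifford}) by exhibiting a character twist that makes the relevant representations factor through a finite quotient of $W'_F$, after which Theorem \ref{clifford} applies verbatim.

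The first step is to find, for a given irreducible $\phi_E \in \Rep(W'_E)$, a character $\chi$ of $W'_F$ (rather than merely of $W'_E$) such that $\chi|_{W'_E} \otimes \phi_E$ has kernel of finite index. A direct application of the observation from \cite{BH.06} to $\phi_E$ only furnishes a character of $W'_E$, which need not extend to $W'_F$. I would circumvent this by applying that observation instead to an irreducible constituent $\phi'$ of $\Ind_{W'_E}^{W'_F}(\phi_E)$ to obtain a character $\chi$ of $W'_F$ with $\chi \otimes \phi'$ of finite image; since $\phi_E$ appears in $\phi'|_{W'_E}$ by Frobenius reciprocity, $\chi|_{W'_E} \otimes \phi_E$ then has finite image as well.

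Next, let $N$ be the intersection of $\ker(\chi|_{W'_E} \otimes \phi_E)$ with its conjugate by $s$; this is a finite-index subgroup of $W'_E$, and is normal in $W'_F$. Setting $G = W'_F/N$, the twisted representation factors through the index-two subgroup $H = W'_E/N$ of the finite group $G$, with $\eta_{E/F}$ descending to the nontrivial character of $G/H$. I would then apply Theorem \ref{clifford} to the pair $(G,H)$ and to the representation $\chi|_{W'_E} \otimes \phi_E$ of $H$, and transport the resulting statements back to the $W'_F$ setting via the identities $\Ind_{W'_E}^{W'_F}(\chi|_{W'_E} \otimes \phi_E) \simeq \chi \otimes \Ind_{W'_E}^{W'_F}(\phi_E)$ and $(\chi \otimes \phi_F)|_{W'_E} \simeq \chi|_{W'_E} \otimes \phi_F|_{W'_E}$, which preserve irreducibility, the $s$-twist, and the extension structure. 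The second item of the corollary, about restriction of an irreducible $\phi_F \in \Rep(W'_F)$, is handled symmetrically by applying the twist-to-finite-image observation directly to $\phi_F$ itself, so that all irreducible constituents of $\phi_F|_{W'_E}$ factor through a common finite quotient of $W'_E$.

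The step I expect to require the most care is ensuring that the twisting character is defined on all of $W'_F$ rather than only on $W'_E$: otherwise untwisting would not commute with $\Ind_{W'_E}^{W'_F}$, and the condition $(\chi|_{W'_E} \otimes \phi_E)^s \simeq \chi|_{W'_E} \otimes \phi_E$ would differ from $\phi_E^s \simeq \phi_E$ by an extraneous character $\chi^s/\chi$. The device of twisting a constituent of $\Ind_{W'_E}^{W'_F}(\phi_E)$ rather than $\phi_E$ directly is the key point that bypasses this obstruction, after which the rest is a mechanical translation of Theorem \ref{clifford}.
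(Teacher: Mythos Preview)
Your approach is exactly the paper's: twist by a character so that the representation factors through a finite quotient, then invoke Theorem \ref{clifford} verbatim. You are in fact more careful than the paper on one point it leaves implicit, namely arranging the twisting character to live on $W'_F$ (by applying \cite[28.6]{BH.06} to an irreducible constituent of $\Ind_{W'_E}^{W'_F}(\phi_E)$ rather than to $\phi_E$ itself), so that untwisting commutes with induction and the condition $\phi_E^s\simeq\phi_E$ is preserved.

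One small technical caveat, shared with the paper's own formulation: when $\phi_E=\psi_E\otimes S$ with $S$ a nontrivial irreducible of $\SL_2(\C)$, no character twist of $\phi_E$ literally has kernel of finite index in $W'_E$, since the $\SL_2(\C)$-factor already has infinite image. The clean fix is to observe at the outset that $W'_E=W_E\times\SL_2(\C)$ sits inside $W'_F=W_F\times\SL_2(\C)$ with the $\SL_2(\C)$-factor common, so $\Ind_{W'_E}^{W'_F}(\psi_E\otimes S)\simeq \Ind_{W_E}^{W_F}(\psi_E)\otimes S$ and likewise for restriction; hence the whole corollary reduces to the Weil-group statement for $W_E\subset W_F$, where your finite-quotient argument goes through as written.
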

\begin{proof}
We recall that by \cite[28.6]{BH.06}, if $\alpha_{K}$ is an irreducible representation of $W_K$ for $K$ local and non Archimedean, then there exists an unramified character $\chi_K$ of $W_K$ such that $\chi_K\otimes \alpha_K$ has co-finite kernel.
 
For the first part of the first point, write $\phi_E=\alpha_E\otimes S$, and suppose first that $\Ind_{W'_E}^{W'_F}(\phi_E)$ is irreducible. Twist 
$\Ind_{W_E}^{W_F}(\alpha_E)$ by an unramified character $\chi_F$ so that $\Ind_{W_E}^{W_F}(\Res_{W_F}^{W_E}(\chi_F)\otimes \alpha_E)$ has a co-finite kernel (hence $\Res_{W_E}^{W_F}(\chi_F) \otimes \alpha_E$ has co-finite kernel as well, as it has to be trivial on $W_E\cap \mathrm{Ker}(\Ind_{W_E}^{W_F}(\Res_{W_E}^{W_F}(\chi_E)\otimes \alpha_E))$). Because $\Res_{W_E}^{W_F}(\chi_F)^s=\Res_{W_E}^{W_F}(\chi_F)$, 
one deduces from Theorem \ref{clifford} applied to $\Res_{W_E}^{W_F}(\chi_E)\otimes \alpha_E$ that $\alpha_E^s\not \simeq \alpha_E$ and that $\alpha_E$ does not extend. This implies the same statements for $\phi_E$. Conversely if $\phi_E^s\not \simeq \phi_E$, then the same holds for $\alpha_E$. Take $\chi_E$ unramified such that $\chi_E\otimes \alpha_E$ has co-finite kernel, and $\chi_F$ any unramified extension of $\chi_E$ to $W_F$. Then $\Ind_{W_E}^{W_F}(\alpha_E)=\chi_F^{-1}\otimes \Ind_{W_E}^{W_F}(\chi_E \otimes \alpha_E)$ is irreducible by Theorem \ref{clifford}, and so is $\Ind_{W'_E}^{W'_F}(\phi_E)=\Ind_{W_E}^{W_F}(\alpha_E)\otimes S$. The second part of the first point is similar, using an unramified character $\chi_F$ of $W_F$ such that $\chi_E\otimes \phi_E$ has cofinite kernel (just take such a $\chi_E$ and extend it to an unramified character of $W_F$). 

The proof of the second point is similar. 

\end{proof}

We will tacitly use the above corollary from now on.

\subsection{Distinction and LLC for $\GL_n$}\label{section LLC}

Let $F$ be a non Archimedean local field, we denote by LLC the local Langlands correspondence (\cite{LRS}, \cite{HT}, \cite{Hen}). For any 
$n\geq 1$, it restricts as a bijection from the set of isomorphism classes 
of $n$-dimensional representations of $W'_F$ to that of (smooth and complex) irreducible representations of $\GL_n(F)$. If $E/F$ is a quadratic extension, and $\pi=\LLC(\phi_F)$ for $\phi_F$ a representation of $W'_F$, we set $\BC_F^E(\pi)= \LLC(\Res_{W'_E}^{W'_F}(\phi_F))$ (the \textit{quadratic base change} of $\pi$), whereas if 
$\tau=\LLC(\phi_E)$ for $\phi_E$ a representation of $W'_E$, we set $\AI_E^F(\tau)= \LLC(\Ind_{W'_E}^{W'_F}(\phi_E))$ (the \textit{quadratic automorphic induction} of $\tau$).
 For $\pi$ a representation of $\GL_n(F)$, we denote by $\pi^\vee$ its dual. If $\pi$ is irreducible, we call it a \textit{discrete series} representation if it has a matrix coefficient $c$ such that 
$|\chi\otimes c|^2$ is integrable on $\GL_n(F)/F^\times .I_n$ (with respect to any Haar measure on the group $\GL_n(F)/F^\times .I_n$) for some character $\chi$ of $\GL_n(F)$. A representation $\phi$ of $W'_F$ is irreducible if and only if $\LLC(\phi)$ is 
a discrete series.\\

Let $N_n(F)$ be the subgroup of $\GL_n(F)$ of upper triangular unipotent matrices, and let $\psi$ be a non trivial character of $F$, which in turn defines a character $\widetilde{\psi}:u\mapsto \psi(u_{1,2}+\dots+u_{n-1,n})$ of $N_n(F)$. We say that an irreducible representation 
$\pi$ of $\GL_n(F)$ is generic if $\Hom_{N_n(F)}(\pi,\widetilde{\psi})\neq \{0\}$ and this does not depend on the choice of $\psi$. Genericity can be read on the Langlands parameter from \cite[Theorem 9.7]{Z} (one way to state it is that $\LLC(\phi)$ is generic if and only if the adjoint L factor of $\phi$ is holomorphic at $s=1$). From this one easily deduces the direct implications of the following proposition, the converse implications being special cases of \cite[Theorem 9.1]{MS}.

\begin{prop}\label{prop MS}
\begin{itemize}
\item Let $\pi$ be an irreducible representation of $\GL_n(F)$. If $\BC_F^E(\pi)$ is generic, then $\pi$ is generic, and conversely if 
$\pi$ is generic unitary, then $\BC_F^E(\pi)$ is generic (unitary).
\item Let $\tau$ be an irreducible representation of $\GL_n(E)$. If $\AI_E^F(\tau)$ is generic, then $\tau$ is generic, and conversely if 
$\tau$ is generic unitary, then $\AI_E^F(\tau)$ is generic (unitary).
\end{itemize}
\end{prop}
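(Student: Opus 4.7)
The plan is to use the criterion recalled just before the statement: $\pi=\LLC(\phi)$ is generic if and only if $L(s,\phi\otimes\phi^\vee)$ is holomorphic at $s=1$. Since the $L$-factor of a Weil--Deligne representation is always of the form $1/P(q^{-s})$ for some polynomial $P$, it never vanishes, so pole conditions on products factor cleanly; moreover, a pole at $s=1$ corresponds to $\phi\otimes\phi^\vee$ containing, in the Weil--Deligne sense, a subquotient isomorphic to the inverse of the normalized absolute value character.

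For the direct implication in the base change statement, set $\phi_F=\LLC^{-1}(\pi)$ and $\phi_E=\Res_{W'_E}^{W'_F}\phi_F$, so that $\phi_E\otimes\phi_E^\vee=\Res_{W'_E}^{W'_F}(\phi_F\otimes\phi_F^\vee)$. Since the restriction of the inverse absolute value of $F$ to $W'_E$ is the inverse absolute value of $E$, any pole of $L(s,\phi_F\otimes\phi_F^\vee)$ at $s=1$ produces a pole of $L(s,\phi_E\otimes\phi_E^\vee)$ at $s=1$. The contrapositive yields the first direct implication.

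For the direct implication in the automorphic induction statement, set $\phi_E=\LLC^{-1}(\tau)$ and $\phi_F=\Ind_{W'_E}^{W'_F}\phi_E$. The projection formula together with the Mackey decomposition $\Res_{W'_E}^{W'_F}\phi_F\simeq\phi_E\oplus\phi_E^s$ gives
\[
\phi_F\otimes\phi_F^\vee\;\simeq\;\Ind_{W'_E}^{W'_F}(\phi_E\otimes\phi_E^\vee)\;\oplus\;\Ind_{W'_E}^{W'_F}(\phi_E\otimes(\phi_E^\vee)^s),
\]
whence, by the inductivity of local $L$-factors,
\[
L(s,\phi_F\otimes\phi_F^\vee)=L(s,\phi_E\otimes\phi_E^\vee)\cdot L(s,\phi_E\otimes(\phi_E^\vee)^s).
\]
Since neither factor vanishes, holomorphy of the product at $s=1$ forces holomorphy of the first factor, which is the second direct implication.

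The converse implications, together with the preservation of unitarity in parentheses, follow by quoting \cite[Theorem 9.1]{MS}, which constructs Whittaker models on $\BC_F^E(\pi)$ and $\AI_E^F(\tau)$ under the generic unitary hypothesis. The main obstacle in this proposition is therefore this converse direction: the naive $L$-factor argument breaks down here, because pole conditions on the parameter side are not obviously preserved by restriction and induction in the direction $\pi\mapsto\BC_F^E(\pi)$ or $\tau\mapsto\AI_E^F(\tau)$ without the additional representation-theoretic input provided by \cite{MS}.
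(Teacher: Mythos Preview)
Your proof is correct and follows exactly the route the paper indicates: the paper does not give a proof of this proposition beyond the sentence preceding it, namely that the direct implications ``easily'' follow from the adjoint $L$-factor criterion for genericity, while the converse implications are special cases of \cite[Theorem 9.1]{MS}. You have simply supplied the details the paper omits --- the inductivity/Mackey computation for $\AI_E^F$ and the restriction argument for $\BC_F^E$ --- and then cited \cite{MS} for the converse, exactly as the paper does. Your closing remark that the $L$-factor argument alone cannot handle the converse direction (so that the unitarity hypothesis and the input from \cite{MS} are genuinely needed) is also accurate.
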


We denote by $\widetilde{\GL_n}(F)$ the double cover of $\GL_n(F)$ defined for example in \cite[Section 2.1]{Kap}. Following \cite{Kap} we call 
a map $\gamma:F^\times \rightarrow \C^\times$ a pseudo-character if it satisfies $\gamma(xy)=\gamma(x)\gamma(y)(x,y)_2^{\lfloor n/2 \rfloor}$ for all $x$ and $y$ in $F^\times$, where $(\ ,\ )_2$ is the Hilbert symbol of $F^\times$. For $\gamma$ a pseudo-character of $F^\times$ we denote by $\theta_{1,\gamma}$ the corresponding Kazhdan-Patterson exceptional representation of $\widetilde{\GL_n}(F)$ as in \cite[Section 2.5]{Kap}.
We say that an irreducible representation $\pi$ of $\GL_n(F)$ is \textit{$\Theta_F$-distinguished} if there exist pseudo-characters $\gamma$ and $\gamma'$ of $F^\times$ such that 
$\Hom_{\GL_n(F)}(\theta_{1,\gamma}\otimes \theta_{1,\gamma'},\pi^\vee)\neq\{0\}$ (where $\theta_{1,\gamma}\otimes \theta_{1,\gamma'}$ indeed factors through $\GL_n(F)$ so that the defintion makes sense).\\

When $n$ is even, we denote by $S_n(F)$ the Shalika subgroup of $\GL_n(F)$ consisting of matrices of the form 
$s(g,x)=\diag(g,g)\begin{pmatrix} I_{n/2} & x \\ & I_{n/2} \end{pmatrix}$ for $g\in \GL_{n/2}(F)$ and $x\in \M_{n/2}(F)$, and for $\psi$ a non trivial character of $F$, we denote by $\Psi$ the character of $S_n(F)$ defined by $\Psi(s(g,x))=\psi(\tr(x))$.
We say that an irreducible representation $\pi$ of $\GL_n(F)$ is \textit{$\Psi_F$-distinguished} if $n$ is even and 
$\Hom_{S_n(F)}(\pi,\Psi)\neq\{0\}$ for some non trivial character $\psi$ of $F$. This does not depend on the choice of $\psi$.\\

Finally if $E/F$ is quadratic separable, identifying $\eta_{E/F}$ to the character of $F^\times$ trivial on $N_{E/F}(E^\times)$ via local class field theory, 
we say that an irreducible representation $\tau$ of $\GL_n(E)$ is \textit{$\1_{E/F}$-distinguished} if 
$\Hom_{\GL_n(F)}(\tau,\1)\neq\{0\}$ and \textit{$\eta_{E/F}$-distinguished} if 
$\Hom_{\GL_n(F)}(\tau,\eta_{E/F}\circ \det)\neq\{0\}$.\\

The following theorem follows from \cite{HenExt}, \cite{Kab}, \cite{AKT}, \cite{AR}, \cite{M.11}, \cite{KR}, \cite{Jo}, \cite{MatShal}, \cite{Yam}, \cite{Kap}. Parts of it are known 
to hold when $F$ is of positive characteristic and odd residual characteristic (\cite[Appendix A]{AKMSS}).

\begin{thm}
Suppose that $F$ has characteristic zero. 
\begin{itemize}
\item Let $\pi=\LLC(\phi_F)$ be a generic representation of $\GL_n(F)$, then $\phi_F$ is symplectic if and only $\pi$ is $\Psi_F$-distinguished, whereas $\phi_F$ is orthogonal if and only if $\pi$ is $\Theta_F$-distinguished. 
\item Let $\tau=\LLC(\phi_E)$ be a generic representation of $\GL_n(E)$, then $\phi_E$ is conjugate-symplectic if and only $\tau$ is $\eta_{E/F}$-distinguished, whereas 
$\phi_E$ is conjugate-orthogonal if and only if $\tau$ is $\1_{E/F}$-distinguished. 
\end{itemize}
\end{thm}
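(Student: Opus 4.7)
The plan is that this theorem is a compilation of a decade's worth of work of several authors, and my proof would not prove anything new but rather assemble the ingredients. The strategy has two layers: first a reduction from the generic unitary case to the case of discrete series, and second a citation of the appropriate result in the discrete series case for each of the four equivalences.

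For the reduction, I would exploit the fact that any generic unitary irreducible representation $\pi$ of $\GL_n(F)$ (resp.\ $\tau$ of $\GL_n(E)$) is fully induced from a product $\rho_1\times\cdots\times\rho_r$ of essentially square-integrable representations, and that on the Galois side this corresponds to a direct sum decomposition $\phi_F=\bigoplus_i\phi_i$ with each $\phi_i$ irreducible. A straightforward linear algebra argument on invariant bilinear forms (as used in \cite{GGP}) shows that $\phi_F$ is symplectic (resp.\ orthogonal) if and only if the self-dual summands $\phi_i$ are symplectic (resp.\ orthogonal) and the non-self-dual summands come in dual pairs; the analogous statement holds for $\phi_E$ and conjugate-duality. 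The parallel statement on the group side is the hereditary behaviour of the four flavours of distinction under parabolic induction: this is exactly the content of \cite{MatShal} for $\Psi_F$-distinction, of \cite{Kap, Yam} for $\Theta_F$-distinction, of \cite{AKT, M.11} for $\eta_{E/F}$-distinction, and of \cite{AR, Kab} for $\1_{E/F}$-distinction.

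In the discrete series case the easy direction (existence of the invariant form implies existence of the model) is in each case classical: Jacquet--Shalika for the Shalika model, Friedberg--Jacquet for the symplectic packaging, and an open-orbit computation plus Mackey theory on the $\GL_n(E)/\GL_n(F)$ side. The converse, which is the deep part, is established in \cite{Jo, MatShal} for the $\Psi_F$ equivalence, in \cite{Kap, Yam} for the $\Theta_F$ equivalence, in \cite{AKT, AR} for the $\eta_{E/F}$ equivalence, and in \cite{Kab, HenExt} for the $\1_{E/F}$ equivalence (with \cite{HenExt} supplying the extension result needed to pass between supercuspidal and discrete series in the Galois-side bookkeeping).

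The main obstacle is entirely one of compatibility of normalisations across the cited references: one must check that the character $\Psi$ of the Shalika subgroup, the Kazhdan--Patterson exceptional representation $\theta_{1,\gamma}$ (which depends on a pseudo-character $\gamma$ and hence on a choice of Hilbert symbol normalisation), the character $\eta_{E/F}$ on $F^\times$ versus on $W'_F$, and the various sign conventions for $(W'_K,\ve)$- and $(L/K,\ve)$-bilinear forms, all match up. Once this verification is made, the theorem is a direct concatenation of the above references, with no new content needed.
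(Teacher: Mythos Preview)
Your proposal is correct and matches the paper's approach: the paper does not give a proof of this theorem at all but simply records it as a known result assembled from the literature, citing \cite{HenExt}, \cite{Kab}, \cite{AKT}, \cite{AR}, \cite{M.11}, \cite{KR}, \cite{Jo}, \cite{MatShal}, \cite{Yam}, \cite{Kap}. Your sketch of how the pieces fit together (reduction to discrete series via hereditary behaviour of distinction under parabolic induction, then invocation of the discrete series results) is exactly the intended assembly; the only minor slip is that the reduction works for all generic representations, not just generic unitary ones, since any generic representation is a full parabolic induction from essentially square-integrable data.
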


\subsection{A reminder on epsilon factors}\label{sec eps}

Let $K'/K$ be a finite separable extension of non Archimedean local fields. We denote by $\w_K$ a uniformizer of $K$ and by $P_K$ the maximal ideal of the ring of integers $O_K$ of $K$. If $\psi$ is a non trivial character of $K$, we denote by $\psi_{K'}$ the character $\psi\circ \tr_{K'/K}$. We call \textit{the conductor of $\psi$} and write $d(\psi)$ for the smallest integer $d$ such that $\psi$ is trivial on $P_K^d$. When 
\textit{$K'/K$ is unramified}, it follows from \cite[Chapter 8, Corollary 3]{W.74} that 
\begin{equation}\label{equation equal conductors}
d(\psi_{K'})=d(\psi).\end{equation}
 Similarly if $\chi$ is a character of $W'_K$ identified by local class field theory with a character of $K^*$, we call 
the Artin conductor of $\chi$ the integer $a(\chi)$ equal to zero if $\chi$ is unramified, or equal to the smallest 
integer $a$ such that $\chi$ is trivial on $1+P_{K}^a$ if $\chi$ is ramified. More generally one can define 
the Artin conductor $a(\phi)$ (which is an integer) of any representation $\phi$ of $W'_K$, see \cite[3.4.5]{T.79} when $\phi$ is a representation of $W_K$ and 
\cite[Section 2.2, (10)]{GR} in general. The Artin conductor is additive: \[a(\phi\oplus\phi')=a(\phi)+a(\phi')\] for 
$\phi$ and $\phi'$ in $\Rep(W'_K)$. If $\phi$ is a representation of $W'_K$, and $\psi$ is a non trivial character of $K$, we refer to \cite[3.6.4]{T.79} and \cite[31.3]{BH.06} or \cite[Section 2.2]{GR} for the definition of the root number 
$\e(1/2,\phi,\psi)$. 
One then defines the Langlands $\l$-constant: \[\l({K'}/K,\psi)=\frac{\e(1/2,\Ind_{W_{K'}}^{W_K}(\mathbf{1}_{W_K}),\psi)}{\e(1/2,\mathbf{1}_{W_K'},\psi_{K'})}.\] For $a\in K^\times$, we set $\psi_a=\psi(a\ . \ )$. These constants enjoy the following list of properties, which we will freely use later in the paper.

\begin{enumerate}
\item \label{equation additivity of epsilon} $\e(1/2,\phi\oplus \phi',\psi)=\e(1/2,\phi,\psi)\e(1/2,\phi',\psi)$ where $\phi'$ is another representation of $W'_K$ (\cite[(3.4.2)]{T.79}).
\item \label{equation translation caractère additif} $\e(1/2,\phi,\psi_a)=\det(\phi(a))\e(1/2,\phi,\psi)$ (\cite[(3.6.6)]{T.79}).
\item \label{equation epsilon times epsilon dual} $\e(1/2,\phi,\psi)^2=\det(\phi)(-1)$ when $\phi$ is self-dual (\cite[Section 2.3, (11)]{GR}).
\item \label{equation torsion NR} If $d(\psi)=0$ and $\mu$ is an unramified character of $K^*$, it follows from \cite[Section 2.3, (9)]{GR} that: 
\[\e(1/2,\mu\otimes \phi,\psi)=\mu(\w_K^{a(\phi)})\e(1/2,\phi,\psi).\] 
\item \label{equation GGP} If ${K'}/K$ is quadratic with $K$ of characteristic not $2$, $\d\in \ker(\tr_{{K'}/K})-\{0\}$, and $\phi$ is a $K'/K$-orthogonal representation of $W'_{K'}$, then by \cite[Proposition 5.2]{GGP} (generalizing \cite[Theorem 3]{FQ.73}): \[\e(1/2,\phi,\psi_{K'})=\det(\phi)(\d).\]
\item \label{equation inductivity} If $\phi_{K'}$ is an $r$-dimensional representation of $W'_{K'}$, then 
\[\e(1/2,\Ind_{W'_{K'}}^{W'_K}(\phi_{K'}),\psi)=\l({K'}/K,\psi)^r\e(1/2,\phi_{K'},\psi_{K'})\] 
(\cite[(30.4.2)]{BH.06}). When applied to a $K'/K$ quadratic and $\phi_{K'}=\Res_{W'_{K'}}^{W'_K}(\phi)$ for $\phi$ a representation of $W'_K$, one gets 
\[\e(1/2,\phi,\psi)\e(1/2,\eta_{K'/K}\otimes \phi,\psi)=\l({K'}/K,\psi)^r\e(1/2,\Res_{W'_{K'}}^{W'_K}(\phi),\psi_{K'})\] 
\item \label{equation constante de Langlands NR} If ${K'}/K$ is unramified with $[K'/K]=n$: \[\l({K'}/K,\psi)=(-1)^{d(\psi)(n-1)}\] (for example \cite{M.86} and \ref{equation translation caractère additif}., together with Equation (\ref{equation equal conductors})). In particular 
if $d(\psi)=0$ then \[\l({K'}/K,\psi)=1.\]
\end{enumerate}

\section{Distinction, base change, and automorphic induction}

From now on $E/F$ is a separable quadratic extension of non Archimedean local fields. Our main result is the following proposition, and we notice that half of its first point is 
\cite[Lemma 3.5, (i)]{GGP}.

\begin{prop}\label{prop main}
\begin{enumerate}
\item\label{1} Let $\phi_E$ be a semi-simple representation of $W'_E$ which is either $\ve$-self-dual or $(E/F,\ve)$-dual, then $\Ind_{W'_E}^{W'_F}(\phi_E)$ is
$\ve$-selfudal. 
\item\label{2} Conversely if $\phi_E$ is irreducible and $\Ind_{W'_E}^{W'_F}(\phi_E)$ is $\ve$-self-dual:
\begin{enumerate}
\item if $\Ind_{W'_E}^{W'_F}(\phi_E)$ is irreducible, i.e. $\phi_E^s\not\simeq \phi_E$, then either $\phi_E$ is $\ve$-self-dual or $(E/F,\ve)$-dual, but not both together, 
\item if $\Ind_{W'_E}^{W'_F}(\phi_E)$ is reducible, i.e. $\phi_E^s\simeq \phi_E$, then 
$\phi_E$ is both $\ve$-self-dual and $(E/F,\ve)$-dual.
\end{enumerate}
\item\label{3} Let $\phi_F$ be a semi-simple representation of $W'_F$ which is $\ve$-self-dual, then 
$\Res_{W'_E}^{W'_F}(\phi_F)$ is $\ve$-self-dual and $(E/F,\ve)$-dual. 
\item\label{4} Conversely, if $\phi_F$ is irreducible and $\Res_{W'_E}^{W'_F}(\phi_F)$ is $\ve$-self-dual and $(E/F,\ve)$-dual then 
 $\phi_F$ is $\ve$-self-dual.
\end{enumerate}
\end{prop}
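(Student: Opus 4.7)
My plan is to prove the four parts in order, using later parts to invoke earlier ones. For Part (1), I will realize $\Ind_{W'_E}^{W'_F}(\phi_E)$ on $V\oplus V$ via the standard Clifford model ($W'_E$ acting by $\phi_E$ on the first summand, by $\phi_E^s$ on the second, and $s$ swapping them with a $\phi_E(s^2)$ factor). If $\phi_E$ is $\varepsilon$-self-dual via a form $B$, the candidate
\[B_F((v_1,v_2),(v_1',v_2'))=B(v_1,v_1')+B(v_2,v_2')\]
is immediately $W'_F$-invariant and $\varepsilon$-symmetric. If $\phi_E$ is instead $(E/F,\varepsilon)$-dual via $B$, then
\[B_F((v_1,v_2),(v_1',v_2'))=B(v_1,v_2')+\varepsilon B(v_1',v_2)\]
works, with $s$-invariance coming from the relation $B(v,\phi_E(s^2)v')=\varepsilon B(v',v)$. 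For Part (3), any $(W'_F,\varepsilon)$-bilinear form on $\phi_F$ restricts to an $\varepsilon$-self-dual form on $\phi_E$, and the twist $B_E(v,v'):=B_F(v,\phi_F(s^{-1})v')$ gives the $(E/F,\varepsilon)$-bilinear form, using only $W'_F$-invariance and $\varepsilon$-symmetry of $B_F$.

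For Part (2), the isomorphism $\Ind(\phi_E)^\vee\simeq\Ind(\phi_E^\vee)$ combined with Mackey (Corollary \ref{clifford2}) forces $\phi_E^\vee\simeq\phi_E$ or $\phi_E^\vee\simeq\phi_E^s$. In case (2a) ($\Ind$ irreducible) these alternatives are mutually exclusive, and combined with uniqueness of the sign on the irreducible $\Ind(\phi_E)$ and Part (1), the sign of the duality of $\phi_E$ must be $\varepsilon$. In case (2b) ($\phi_E^s\simeq\phi_E$), both alternatives automatically hold. Here $\Ind(\phi_E)=\phi_F\oplus\eta_{E/F}\phi_F$ with the two summands non-isomorphic irreducibles, so the $\varepsilon$-form on $\Ind(\phi_E)$ is block-diagonal, forcing each summand to be individually $\varepsilon$-self-dual; Part (3) applied to $\phi_F$ then gives that $\phi_E$ is both $\varepsilon$-self-dual and $(E/F,\varepsilon)$-dual.

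For Part (4), I will case-split on $\phi_F|_{W'_E}$: either (A) $\phi_E$ is irreducible, or (B) $\phi_F\simeq\Ind(\psi)$ with $\psi^s\not\simeq\psi$ and $\phi_E=\psi\oplus\psi^s$. In case (A), Clifford theory forces $\phi_F^\vee\simeq\phi_F$ (subcase A1) or $\phi_F^\vee\simeq\eta_{E/F}\phi_F$ (subcase A2). A1 is handled by Part (3) and uniqueness of the sign on the irreducible $\phi_E$. To rule out A2 using the $(E/F,\varepsilon)$-dual hypothesis, I will use that the $(E/F)$-bilinear form on $\phi_E$ is unique up to scalar and can be written as $B_2(v,v')=B_1(v,\phi_F(s^{-1})v')$ where $B_1$ is the $\varepsilon$-self-dual form; its symmetry sign is $\varepsilon\lambda$ where $\lambda$ is defined by $B_1(\phi_F(s)v,\phi_F(s)v')=\lambda B_1(v,v')$, and in A2 the $\eta_{E/F}$-equivariance of the pairing on $\phi_F$ gives $\lambda=\eta_{E/F}(s)=-1$, so $\phi_E$ would be $(E/F,-\varepsilon)$-dual, contradicting the hypothesis. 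In case (B), the self-dual hypothesis gives $\psi^\vee\simeq\psi$ (B1) or $\psi^\vee\simeq\psi^s$ (B2); in B1 the block-diagonal $\varepsilon$-form on $\phi_E$ forces the sign of $\psi$'s self-duality to be $\varepsilon$, so Part (1) gives that $\phi_F=\Ind(\psi)$ is $\varepsilon$-self-dual; in B2 the analogous analysis of the $(E/F,\varepsilon)$-bilinear form on $\phi_E=\psi\oplus\psi^s$ forces the sign of $\psi$'s conjugate-duality to be $\varepsilon$, and Part (1) again concludes.

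The main obstacle will be the sign-tracking in Part (4) subcase A2: correctly identifying $\lambda=\eta_{E/F}(s)=-1$ is what links the obstruction $\phi_F^\vee\simeq\eta_{E/F}\phi_F$ to the sign $-\varepsilon$ of the resulting $(E/F)$-form on $\phi_E$, and this coincidence is the crux of the converse direction for restriction.
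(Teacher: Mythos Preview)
Your treatment of Parts (1), (3), (2a), and (4) is essentially correct and close to the paper's, with one pleasant variation: in Part (4) when $\Res_{W'_E}^{W'_F}(\phi_F)$ is irreducible, the paper argues directly that the form $D_E(x,y)=B_E(x,s^{-1}.y)$ must carry the sign $\ve$ (by uniqueness of $E/F$-bilinear forms) and hence that $B_E$ is $s$-invariant, whereas you split into A1/A2 and in A2 compute the scalar $\lambda$ explicitly. Both work; yours makes the obstruction $\phi_F^\vee\simeq\eta_{E/F}\phi_F$ visible as the source of the sign flip.

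There is, however, a genuine gap in your Part (2b). You write that since $\phi_F$ and $\eta_{E/F}\phi_F$ are non-isomorphic irreducibles, the $\ve$-form on $\Ind_{W'_E}^{W'_F}(\phi_E)=\phi_F\oplus\eta_{E/F}\phi_F$ is block-diagonal. This is false: the off-diagonal block of a $W'_F$-invariant bilinear form lies in $\Hom_{W'_F}(\phi_F,(\eta_{E/F}\phi_F)^\vee)$, which vanishes only when $\phi_F^\vee\not\simeq\eta_{E/F}\phi_F$. The case $\phi_F^\vee\simeq\eta_{E/F}\phi_F$ with $\phi_F\not\simeq\phi_F^\vee$ really does occur (e.g.\ take $\phi_F=\chi$ a character of $W'_F$ with $\chi^2=\eta_{E/F}$, so that $\phi_E=\chi|_{W'_E}$ is quadratic and $\Ind(\phi_E)=\chi\oplus\chi^{-1}$), and in that case the $\ve$-form is purely \emph{off}-diagonal, so your argument breaks down and you cannot conclude that $\phi_F$ is $\ve$-self-dual. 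The paper treats this subcase separately: it observes that $\phi_E^\vee\simeq\phi_E$ (by restricting $\phi_F^\vee\simeq\eta_{E/F}\phi_F$), so $\phi_E$ is $\ve'$-self-dual and also, since $\phi_E^s\simeq\phi_E$, it is $(E/F,\ve'')$-dual; it then invokes Part (1) together with a multiplicity-one argument for $W'_F$-invariant bilinear forms on $\Ind(\phi_E)$ to pin down $\ve'=\ve''=\ve$. You need to supply an argument for this off-diagonal subcase as well.
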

\begin{proof}
\begin{enumerate}
 \item  First suppose that $B_E$ is a $(E/F,\ve)$-bilinear form on $\phi_E$. Write an element $v$ (resp. $v'$) in $\Ind_{W'_E}^{W'_F}(\phi_E)$ under the form 
 $v=x+s^{-1}.y$ (resp. $v'=x'+s^{-1}.y'$) for $x,\ x', \ y, \ y'$ in $\phi_E$, and set \[B_F(v,v')=B_E(x,y')+ \ve B_E(x',y).\] Then $B_F$ is $W'_E$-invariant because 
 $B_E$ is $(W'_E,\ve)$-conjugate (it is non-degenerate because so is $B_E$). Finally 
 \[B_F(s .v,s .v')=B_E(y,s^{2}.x')+\ve B_E(y',s^{2}.x)=\ve B_E(x',y)+ B_E(x,y')=B_F(v,v').\] 
 Similarly if $B_E$ is $(W'_E,\ve)$-bilinear, then one checks that 
 \[B_F(x+s^{-1}y,x'+s^{-1}.y')=B_E(x,x')+ B_E(y,y')\] defines a $(W'_F,\ve)$-bilinear form on $\phi_F$.
 \item Suppose that $\phi_E$ is irreducible and that $\Ind_{W'_E}^{W'_F}(\phi_E)$ is $\ve$-self-dual with $(W'_F,\ve)$-bilinear form $B_F$.
 \begin{enumerate}
 \item If $\phi_E^s\not \simeq \phi_E$, because $\Ind_{W'_E}^{W'_F}(\phi_E)$ is self-dual then either $\phi_E$ is self-dual, or 
 $\phi_E^s\simeq \phi_E^\vee$ but not both together. In the first case, say that $\phi_E$ is $\ve'$-self-dual, then so is 
 $\Ind_{W'_E}^{W'_F}(\phi_E)$ by \ref{1}, but then $\ve'=\ve$ by irreducibility of $\Ind_{W'_E}^{W'_F}(\phi_E)$. If 
 $\phi_E^s\simeq \phi_E^\vee$ we conclude in a similar manner.
 \item If $\phi_E^s\simeq\phi_E$ then $\Ind_{W'_E}^{W'_F}(\phi_E)\simeq \phi\oplus \eta_{E/F}\otimes \phi$ for $\phi$ extending $\phi_E$, and 
 $\phi\not \simeq  \eta_{E/F}\otimes\phi$. Because $\phi\not \simeq  \eta_{E/F}\otimes\phi$ there are two disjoint cases. The first is when $\phi$ is self-dual, in which case $\phi\perp \eta_{E/F}\otimes\phi$ and $B_F$ restricts non trivially to $\phi\times \phi$ (and $\eta_{E/F}\otimes\phi\times \eta_{E/F}\otimes\phi$). Then $\phi_E$ is $\ve$-dual 
 and $(\ve,s)$-dual by \ref{3}. Otherwise $\phi^\vee\simeq \eta_{E/F}\otimes\phi$ and $B_F$ is zero on $\phi\times \phi$ and $\eta_{E/F}\otimes\phi\times \eta_{E/F}\otimes\phi$. In this case there is up to scaling a unique $W'_F$-invariant bilinear form on $\Ind_{W'_E}^{W'_F}(\phi_E)$, namely $B_F$. Because $\phi_E^\vee\simeq \phi_E$ (by restricting the relation $\phi^\vee\simeq \eta_{E/F}\otimes\phi$ to $W'_E$), $\phi_E$ must be 
 $\ve'$-self-dual, hence $\Ind_{W'_E}^{W'_F}(\phi_E)$ as well by \ref{1}, but then we have $\ve'=\ve$ by multiplicity one of 
 $W'_F$-invariant bilinear form on $\Ind_{W'_E}^{W'_F}(\phi_E)$. Moreover because 
 $\phi_E^s=\phi_E$ the parameter $\phi_E$ is also $(\ve'',s)$-self-dual and by \ref{1} again we deduce that $\ve''=\ve$. 
 \end{enumerate}
 \item Let $B_F$ be a $(W'_F,\ve)$-bilinear form on $\phi_F$, then it remains a $(W'_E,\ve)$-bilinear on $\Res_{W'_E}^{W'_F}(\phi_F)$, and on the other hand 
 \[B_E(x,y)=B_F(x,s^{-1}.y)\] is an $(E/F,\ve)$-bilinear form on $\Res_{W'_E}^{W'_F}(\phi_F)$.
 \item We suppose that $\phi_F$ is irreducible and that $\Res_{W'_E}^{W'_F}(\phi_F)$ is $\ve$-self-dual and also $(E/F,\ve)$-dual. There are two cases to consider. 
 
 First if $\Res_{W'_E}^{W'_F}(\phi_F)$ is irreducible, then denote by $B_E$ the $(W'_E,\ve)$-bilinear form on 
 $\Res_{W'_E}^{W'_F}(\phi_F)$. Now set $D_E(x,y)=B_E(x,s^{-1}.y)$ for $x,\ y \in \Res_{W'_E}^{W'_F}(\phi_F)$. Clearly $D_E$ is $E/F$-bilinear, but by irreducibility $\Res_{W'_E}^{W'_F}(\phi_F)$ affords at most one such form up to scalar, hence $D_E$ must be $(E/F,\ve)$-bilinear. This 
 implies that for $x$ and $y$ in $\Res_{W'_E}^{W'_F}(\phi_F)$ one has
  \[B_E(s.x,s.y)=D_E(s.x,s^{2}.y)=\ve D_E(y,s.x)=\ve B_E(y,x)=B_E(x,y).\]
  All in all, when $\Res_{W'_E}^{W'_F}(\phi_F)$ is irreducible we deduce that $B_E$ is in fact $W_F$-invariant hence that 
  $\phi_F$ is $\ve$-self-dual. 
  
It remains to treat the case where $\Res_{W'_E}^{W'_F}(\phi_F)$ is reducible. In this case it is of the form $\phi_E\oplus s^{-1}.\phi_E$ where $\phi_E$ is an irreducible of $W'_E$ such that $\phi_E^s\not \simeq \phi_E$ and $\phi_F=\Ind_{W'_E}^{W'_F}(\phi_E)$. First because $\Res_{W'_E}^{W'_F}(\phi_F)$ is $\ve$-self-dual, then 
the $(W'_E,\ve)$-bilinear form $B_E$ on $\Res_{W'_E}^{W'_F}(\phi_F)$ either induces an isomorphism $\phi_E^s\simeq \phi_E^\vee$ or $\phi_E \perp s^{-1}.\phi_E$ for $B_E$. 
Similarly the $(E/F,\ve)$-bilinear form $C_E$ on $\Res_{W'_E}^{W'_F}(\phi_F)$ either induces an isomorphism $(\phi_E^s)^\vee\simeq \phi_E^s \Longleftrightarrow \phi_E\simeq \phi_E^\vee$ or $\phi_E \perp s^{-1}.\phi_E$ for $C_E$. Suppose that $B_E$ induces 
an isomorphism $\phi_E^s\simeq \phi_E^\vee$, then one must have $\phi_E \perp s^{-1}.\phi_E$ for $C_E$ because $\phi_E\not\simeq \phi_E^s\simeq \phi_E^\vee$. This implies that $C_E$ induces an $(E/F,\ve)$-bilinear form on $\phi_E$ and by point \ref{1} we deduce that $\phi_F$ is $\e$-self-dual. 
On the other hand if $\phi_E \perp s^{-1}.\phi_E$ for $B_E$ then $B_E$ 
induces an $(W'_E,\ve)$-bilinear form on $\phi_E$ and $\phi_F$ is $\ve$-self-dual again by point \ref{1}.
\end{enumerate}

\end{proof}

\textit{Supposing that $F$ has characteristic zero}, we translate Proposition \ref{prop main} via the LLC, in view of the results recalled in Section \ref{section LLC}. For this we denote by $\sigma$ the Galois conjugation of $E/F$ and its extension to $\GL_n(E)$, and set $\tau^\sigma=\tau\circ \sigma$ for any representation of $\GL_n(E)$. 

\begin{cor}\label{cor main}
\begin{enumerate}
\item Let $\tau$ be an irreducible representation of $\GL_n(E)$ such that $\AI_E^F(\tau)$ is generic (for example $\tau$ generic unitary). If $\tau$ is either $\Theta_E$-distinguished or $\1_{E/F}$-distinguished, then $\AI_E^F(\tau)$ is $\Theta_F$-distinguished, whereas if $\tau$ is either $\Psi_E$-distinguished or $\eta_{E/F}$-distinguished, then $\AI_E^F(\tau)$ is $\Psi_F$-distinguished.
\item Conversely if $\tau$ is a discrete series representation of $\GL_n(E)$.
\begin{enumerate}
\item Suppose that $\AI_E^F(\tau)$ is $\Psi_F$-distinguished:
\begin{enumerate}
\item if $\AI_E^F(\tau)$ is a discrete series, i.e. if $\tau^\sigma\not \simeq \tau$, then either $\tau$ is $\Psi_E$-distinguished or $\eta_{E/F}$-distinguished, but not both together, 
\item if $\AI_E^F(\tau)$ is not a discrete series, i.e. $\tau^\sigma\simeq  \tau$, then 
$\tau$ is both $\Psi_E$-distinguished and $\eta_{E/F}$-distinguished.
\end{enumerate}
\item Suppose that $\AI_E^F(\tau)$ is $\Theta_F$-distinguished:
\begin{enumerate}
\item if $\AI_E^F(\tau)$ is a discrete series, i.e. $\tau^\sigma\not \simeq \tau$, then either $\tau$ is $\Theta_F$-distinguished or $\1_{E/F}$-distinguished, but not both together, 
\item if $\AI_E^F(\tau)$ is not a discrete series, i.e. $\tau^\sigma\simeq \tau$, then 
$\tau$ is both $\Theta_E$-distinguished and $\1_{E/F}$-distinguished.
\end{enumerate}
\end{enumerate}
\item Let $\pi$ be an irreducible representation of $\GL_n(F)$ such that $\BC_F^E(\pi)$ is generic (for example $\pi$ generic unitary). If $\pi$ is $\Theta_F$-distinguished, then 
$\BC_F^E(\pi)$ is $\Theta_E$-distinguished and $\1_{E/F}$-distinguished, whereas if $\pi$ is $\Psi_F$-distinguished, then 
$\BC_F^E(\pi)$ is $\Psi_E$-distinguished and $\eta_{E/F}$-distinguished.
\item Conversely suppose that $\pi$ is a discrete series. If $\BC_F^E(\pi)$ is $\Theta_E$-distinguished and $\1_{E/F}$-distinguished, then 
 $\pi$ is $\Theta_F$-distinguished, whereas if $\BC_F^E(\pi)$ is $\Psi_E$-distinguished and $\eta_{E/F}$-distinguished, then 
 $\pi$ is $\Psi_F$-distinguished.
\end{enumerate}
\end{cor}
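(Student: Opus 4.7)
The plan is to translate Proposition \ref{prop main} through the LLC, using the dictionary of Section \ref{section LLC} between the self-duality or conjugate-duality of a Langlands parameter and the corresponding distinction properties of the associated generic representation. The only extra ingredient needed beyond Proposition \ref{prop main} is the genericity-preservation statement of Proposition \ref{prop MS}, which guarantees that the output of $\BC_F^E$ or $\AI_E^F$ remains generic so that the dictionary can be applied on both sides.

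For the forward implications (parts 1 and 3), I would write $\tau=\LLC(\phi_E)$ or $\pi=\LLC(\phi_F)$. The distinction hypothesis together with the dictionary endows $\phi_E$ or $\phi_F$ with the relevant $\ve$-self-dual or $(E/F,\ve)$-dual structure. Then point (1) of Proposition \ref{prop main} exhibits a $(W'_F,\ve)$-bilinear form on $\Ind_{W'_E}^{W'_F}(\phi_E)$, while point (3) produces simultaneously a $(W'_E,\ve)$-bilinear and an $(E/F,\ve)$-bilinear form on $\Res_{W'_E}^{W'_F}(\phi_F)$. Proposition \ref{prop MS} ensures that $\AI_E^F(\tau)$, respectively $\BC_F^E(\pi)$, remains generic, so reapplying the dictionary reads off the claimed distinction property.

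For the converses (parts 2 and 4), the discrete series hypothesis means $\phi_E$ or $\phi_F$ is irreducible, which is precisely the hypothesis of points (2) and (4) of Proposition \ref{prop main}. By Proposition \ref{prop MS}, $\AI_E^F(\tau)$ and $\BC_F^E(\pi)$ are again generic, so the distinction hypothesis translates through the dictionary into the $\ve$-self-duality of $\Ind_{W'_E}^{W'_F}(\phi_E)$, or the joint $\ve$-self-duality and $(E/F,\ve)$-duality of $\Res_{W'_E}^{W'_F}(\phi_F)$. Applying the relevant point of Proposition \ref{prop main} returns the sought structural properties of $\phi_E$ or $\phi_F$, which by the dictionary give the desired distinction of $\tau$ or $\pi$. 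The sub-case dichotomy in part (2) mirrors exactly the Galois-equivariance of LLC, namely $\tau^\sigma\simeq\tau$ if and only if $\phi_E^s\simeq\phi_E$, combined with Corollary \ref{clifford2}: ``$\AI_E^F(\tau)$ is a discrete series" matches ``$\phi_E^s\not\simeq\phi_E$".

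The bulk of the argument is bookkeeping across LLC. The main place where care is required is the case $\phi_E^s\simeq\phi_E$ of part (2), where one must ensure that \emph{both} the $\Psi_E$- or $\Theta_E$-distinction and the $\eta_{E/F}$- or $\1_{E/F}$-distinction of $\tau$ are produced simultaneously; this corresponds exactly to Proposition \ref{prop main}(2)(b), which delivers on $\phi_E$ both an $\ve$-self-dual form and an $(E/F,\ve)$-bilinear form. With that correspondence in place, the rest of the corollary is a formal transfer.
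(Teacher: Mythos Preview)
Your proposal is correct and follows exactly the paper's approach: the corollary is presented there simply as the translation of Proposition \ref{prop main} via the LLC using the dictionary of Section \ref{section LLC}, with no separate proof given. One small imprecision: in parts 1 and 3 the genericity of $\AI_E^F(\tau)$ and $\BC_F^E(\pi)$ is already assumed, so what Proposition \ref{prop MS} actually supplies there is the genericity of $\tau$ and $\pi$ themselves (needed to apply the dictionary on the source side), rather than the other way around.
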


\section{Parity of the Artin conductor of self-dual representations}

In this section $F$ is again a non Archimedean local field. First, using \cite[Proposition 5.2]{GGP} (which is itself a 
quick but non trivial consequence of a difficult result of Deligne \cite{Deligne} on root numbers of orthogonal representations), we quickly 
recover in odd residual characteristic from Proposition \ref{prop main}, \ref{3} the following result due to Serre \cite{Serre} (the result in question also holds in even residual characteristic by \cite{Serre}). In other words we show that the result of \cite{Deligne} implies that of \cite{Serre} for non Archimedean local fields of odd residual characteristic.

\begin{cor}[of Proposition \ref{prop main}, \cite{Serre}]\label{cor serre}
Let $\phi$ be an orthogonal representation of $W'_F$. We have the following congruence of Artin conductors: $a(\phi)=a(\det(\phi))[2]$.
\end{cor}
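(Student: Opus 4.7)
The plan is to combine Proposition \ref{prop main}(\ref{3}) with the epsilon-factor toolkit from Section \ref{sec eps}, working over the unramified quadratic extension $E/F$ (which exists because $F$ is non-Archimedean). The hypothesis on the residual characteristic will enter only through two features: it lets us write $E=F(\sqrt u)$ for a non-square unit $u$, and it makes every quadratic character of $F^\times$ automatically tame.

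Fix such $E/F$ and a non-trivial character $\psi$ of $F$ with $d(\psi)=0$; then $\l(E/F,\psi)=1$ by (\ref{equation constante de Langlands NR}) and $d(\psi_E)=0$ by (\ref{equation equal conductors}). The inductivity relation (\ref{equation inductivity}) applied to $\phi$ itself reads
\[\e(1/2,\phi,\psi)\,\e(1/2,\eta_{E/F}\otimes\phi,\psi)=\e(1/2,\Res_{W'_E}^{W'_F}(\phi),\psi_E).\]
By Proposition \ref{prop main}(\ref{3}) the restriction $\Res_{W'_E}^{W'_F}(\phi)$ is conjugate-orthogonal, so (\ref{equation GGP}) identifies the right-hand side with $\det(\phi)(N_{E/F}(\d))$ for any nonzero $\d\in\ker(\tr_{E/F})$. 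On the left-hand side, $\eta_{E/F}$ is unramified with $\eta_{E/F}(\w_F)=-1$, so (\ref{equation torsion NR}) gives $\e(1/2,\eta_{E/F}\otimes\phi,\psi)=(-1)^{a(\phi)}\e(1/2,\phi,\psi)$, and (\ref{equation epsilon times epsilon dual}) gives $\e(1/2,\phi,\psi)^2=\det(\phi)(-1)$. Choosing $\d=\sqrt u$, so that $N_{E/F}(\d)=-u$, and using that $\det(\phi)^2=1$ because orthogonality gives $\phi\simeq\phi^\vee$, these identities collapse to
\[(-1)^{a(\phi)}=\det(\phi)(u).\]

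A short case analysis on the quadratic character $\det(\phi)$ of $F^\times$ then concludes. If $\det(\phi)$ is trivial or unramified, then $a(\det(\phi))=0$, and in either case $\det(\phi)(u)=1$ (in the non-trivial unramified case because $u$ is a unit). If $\det(\phi)$ is ramified, the odd residual characteristic makes every quadratic character of $F^\times$ trivial on $1+P_F$, forcing $a(\det(\phi))=1$; the restriction of $\det(\phi)$ to $O_F^\times$ then factors through the unique order-two character of $k_F^\times$, which sends the non-square $\bar u$ to $-1$. In every case $(-1)^{a(\phi)}=(-1)^{a(\det(\phi))}$, which is the claimed congruence. The only substantive input is the Gross--GGP root number formula (\ref{equation GGP}); the main obstacle is really conceptual rather than technical, namely recognizing that orthogonality of $\phi$ upgrades to conjugate-orthogonality of $\Res(\phi)$ so that (\ref{equation GGP}) becomes applicable.
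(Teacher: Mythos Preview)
Your proof is correct and follows the same route as the paper's: restrict to the unramified quadratic extension, use Proposition \ref{prop main}(\ref{3}) to get conjugate-orthogonality so that (\ref{equation GGP}) applies, and balance this against the left-hand side of the inductivity relation via (\ref{equation torsion NR}) and (\ref{equation epsilon times epsilon dual}). The only cosmetic difference is in the choice of the trace-zero element: the paper takes $\delta=u^{(q+1)/2}$ for a generator $u$ of $k_E^\times$ (so that $\Delta=\delta^2$ generates $k_F^\times$), whereas you pick a non-square unit $u\in O_F^\times$ and set $\delta=\sqrt u$; both lead to the same identity $(-1)^{a(\phi)}=\det(\phi)(u)$ for a non-square unit $u$, and the same tame case analysis on the quadratic character $\det(\phi)$ finishes.
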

\begin{proof}
As we said the result is true for $F$ of any residual characteristic, and we recover it in this proof for $F$ of residual characteristic different from $2$. Let $E$ be the unramified quadratic extension of $F$, and take $\psi$ a character of $F$ of conductor zero. We have according to Section \ref{sec eps}, Points \ref{equation inductivity} and \ref{equation constante de Langlands NR} 
\begin{equation}\label{eq chakal} \e(1/2,\Res_{W'_E}^{W'_F}(\phi),\psi_E)=\e(1/2,\phi,\psi)\e(1/2,\eta_{E/F}\otimes \phi,\psi).\end{equation} Now denoting by $q$ the residual cardinality of $F$, let $u$ be an element of order $q^2-1$ in $E^\times$, so that 
$\d:=u^{(q+1)/2}$ does not belong to $F$ but $\Delta:=\d^2$ belongs to $F$. Note that the image of $\Delta$ generates $O_F^\times/1+P_F$. Then $\e(1/2,\Res_{W'_E}^{W'_F}(\phi),\psi_{E,\delta}^{-1})=1$ by Proposition \ref{prop main}, \ref{3} and Section \ref{sec eps}, Point \ref{equation GGP}, hence 
\[\e(1/2,\Res_{W'_E}^{W'_F}(\phi),\psi_E)=\det(\Res_{W'_E}^{W'_F}(\phi))(\delta)=\det(\phi)(N_{E/F}(\delta))=\det(\phi)(-\Delta)\] thanks to 
Section \ref{sec eps}, Point \ref{equation translation caractère additif}. Now observe 
that $\det(\phi)$ is quadratic as $\phi$ is self-dual, but 
because $q$ is odd it is trivial on $1+P_F$, hence it has conductor $0$ or $1$, and it is of conductor zero if and only if 
$\det(\phi)(\Delta)=1$, hence $\det(\phi)(\Delta)=(-1)^{a(\det(\phi))}$, so 
\[\e(1/2,\Res_{W'_E}^{W'_F}(\phi),\psi_E)=(-1)^{a(\det(\phi))}\det(\phi)(-1).\] Now 
$\e(1/2,\eta_{E/F}\otimes \phi,\psi)=(-1)^{a(\phi)}\e(1/2,\phi,\psi)$ thanks to Section \ref{sec eps}, Point \ref{equation torsion NR}, hence Section \ref{sec eps}, Point \ref{equation epsilon times epsilon dual} implies the following:
\[\e(1/2,\phi,\psi)\e(1/2,\eta_{E/F}\otimes \phi,\psi)=(-1)^{a(\phi)}\e(1/2,\phi,\psi)^2
=(-1)^{a(\phi)}\det(\phi)(-1).\] The result now follows from Equation (\ref{eq chakal}).
\end{proof}

One can legitimately ask about the parity of the Artin conductor of symplectic representations of $W'_F$. The answer seems much more complicated, and one way to adress it is via the LLC, using the so called Prasad and Takloo-Bighash conjecture, which is now a theorem when $F$ has characteristic zero and residual characteristic different from $2$ (\cite{X.20}, \cite{Sec.20}, \cite{Suz.20}, \cite{SX.20}). To this end we recall that for $E/F$ a separable quadratic extension, then the matrix algebra $\M_{n}(E)$ embeds uniquely up to $\GL_{2n}(F)$-conjugacy into $\M_{2n}(F)$ as an $F$-subalgebra by the Skolem-Noether theorem. We fix such an embedding, which in turn gives rise to an embedding of $\GL_n(E)$ into $\GL_{2n}(F)$. We then say that an irreducible representation $\pi$ of $\GL_{2n}(F)$ is 
$\1^{E/F}$-distinguished if and only if $\Hom_{\GL_n(E)}(\pi,\1)\neq\{0\}$. We recall the following theorem, which is a consequence of one part of the Prasad and Takloo-Bighash conjecture.

\begin{thm}[\cite{X.20}, \cite{Sec.20}, \cite{SX.20}]\label{thm PTB}
Suppose that $F$ has characteristic zero and residual characteristic different from $2$. If $\phi$ is an irreducible symplectic representation of $W'_F$ of dimension $2n$, then \[\e(1/2,\phi\otimes\Ind_{W'_E}^{W'_F}(\1))=\eta_{E/F}(-1)^{n}\] if $\LLC(\phi)$ is $\1^{E/F}$-distinguished and \[\e(1/2,\phi\otimes\Ind_{W'_E}^{W'_F}(\1))=-\eta_{E/F}(-1)^{n}\] otherwise. 
\end{thm}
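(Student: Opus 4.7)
My plan is to deduce the statement from the Prasad and Takloo-Bighash conjecture, which is now a theorem under the hypotheses ($F$ of characteristic zero, residual characteristic $\neq 2$) thanks to the combined works \cite{X.20}, \cite{Sec.20}, \cite{SX.20}.

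First I unpack the epsilon factor of interest. Since $\Ind_{W'_E}^{W'_F}(\1)\simeq \1\oplus\eta_{E/F}$, additivity (point \ref{equation additivity of epsilon} of Section \ref{sec eps}) gives
\[\e(1/2,\phi\otimes\Ind_{W'_E}^{W'_F}(\1),\psi)=\e(1/2,\phi,\psi)\,\e(1/2,\eta_{E/F}\otimes\phi,\psi).\]
Because $\phi$ lands in a symplectic group, $\det(\phi)=\1$ and thus $\det(\eta_{E/F}\otimes\phi)=\eta_{E/F}^{2n}=\1$ also. Point \ref{equation epsilon times epsilon dual} then shows that each factor on the right squares to $+1$, so the product is indeed a sign; point \ref{equation translation caractère additif} further shows the sign is independent of the choice of $\psi$. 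This matches the form $\pm\eta_{E/F}(-1)^n$ claimed in the statement.

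Second, the hypothesis that $\phi$ is irreducible means $\pi:=\LLC(\phi)$ is a discrete series of $\GL_{2n}(F)$ with symplectic Langlands parameter, so $\pi$ falls precisely in the scope of the PTB dichotomy. That dichotomy, in the split/symplectic case relevant here, asserts that $\pi$ is distinguished by the subgroup $\GL_n(E)\hookrightarrow\GL_{2n}(F)$ (with trivial character) if and only if
\[\e(1/2,\phi\otimes\Ind_{W'_E}^{W'_F}(\1))\cdot\eta_{E/F}(-1)^{-n}=+1,\]
the opposite sign being attained exactly when the Jacquet–Langlands transfer of $\pi$ to the quaternionic inner form is distinguished by the corresponding $\GL_n$-subgroup there. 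Once this equivalence is granted, the two formulas of the statement follow immediately by solving for the epsilon factor in each of the two mutually exclusive cases.

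The main obstacle is the PTB conjecture itself, whose proof in the cited papers uses a deep relative trace formula comparison and is the substantive content. For the present theorem only the final bookkeeping of the normalizing sign $\eta_{E/F}(-1)^n$, which arises from the embedding $E\hookrightarrow\Mat_2(F)$ entering the PTB setup, needs to be verified directly; this can be pinned down by a short explicit calculation in the spirit of Corollary \ref{cor serre}, using points \ref{equation translation caractère additif} and \ref{equation torsion NR} of Section \ref{sec eps}.
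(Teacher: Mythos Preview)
The paper does not give a proof of this theorem at all: it is stated as a cited result, attributed to \cite{X.20}, \cite{Sec.20}, \cite{SX.20}, and introduced with the sentence ``We recall the following theorem, which is a consequence of one part of the Prasad and Takloo-Bighash conjecture.'' Your proposal is therefore in complete agreement with the paper's treatment: you correctly identify that the statement is nothing other than (a reformulation of) the PTB dichotomy for the split inner form in the symplectic case, and that the substantive content lies entirely in the cited references. Your unpacking of $\e(1/2,\phi\otimes\Ind_{W'_E}^{W'_F}(\1),\psi)=\e(1/2,\phi,\psi)\e(1/2,\eta_{E/F}\otimes\phi,\psi)$ and your observation that $\det(\phi)=\1$ makes the root number $\psi$-independent is exactly the content of the Remark the paper places immediately after the theorem; the decomposition $\Ind_{W'_E}^{W'_F}(\1)\simeq\1\oplus\eta_{E/F}$ is also what the paper uses implicitly in the proof of Corollary~\ref{cor PTB}. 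In short, there is nothing to compare: you and the paper both defer the proof to the literature, and your additional remarks reproduce the paper's surrounding commentary.
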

\begin{rem}
In the statement above, as the determinant of a symplectic representation is equal to $1$, we suppressed the dependence of the root number 
$\e(1/2,\phi\otimes\Ind_{W'_E}^{W'_F}(\1),\psi)$ on the non-trivial additive character $\psi$ of $F$. 
\end{rem}

As an immediate corollary we obtain the following result on the parity of Artin conductors of symplectic representations. 

\begin{cor}\label{cor PTB}
Suppose that $F$ has characteristic zero and residual characteristic different from $2$, denote by $E$ the unramified quadratic extension of $F$, and let $\phi$ be an irreducible symplectic representation of $W'_F$. Then $a(\phi)$ is even if and only if $\LLC(\phi)$ is $\1^{E/F}$-distinguished.
\end{cor}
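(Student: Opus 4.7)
The plan is to compute the root number $\e(1/2,\phi\otimes\Ind_{W'_E}^{W'_F}(\1),\psi)$ explicitly, for a convenient choice of $\psi$, and compare the result with the dichotomy provided by Theorem \ref{thm PTB}.

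First I would choose a non-trivial additive character $\psi$ of $F$ of conductor zero, and observe that $\Ind_{W'_E}^{W'_F}(\1)\simeq \1\oplus \eta_{E/F}$, so that by the additivity property \ref{equation additivity of epsilon} of Section \ref{sec eps},
\[\e(1/2,\phi\otimes\Ind_{W'_E}^{W'_F}(\1),\psi)=\e(1/2,\phi,\psi)\,\e(1/2,\eta_{E/F}\otimes\phi,\psi).\]
Since $E/F$ is unramified, the character $\eta_{E/F}$ is unramified (its kernel $N_{E/F}(E^\times)$ contains $O_F^\times$), so by point \ref{equation torsion NR} the second factor equals $\eta_{E/F}(\w_F^{a(\phi)})\,\e(1/2,\phi,\psi)=(-1)^{a(\phi)}\e(1/2,\phi,\psi)$. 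Hence
\[\e(1/2,\phi\otimes\Ind_{W'_E}^{W'_F}(\1),\psi)=(-1)^{a(\phi)}\e(1/2,\phi,\psi)^2.\]

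Next I would use that $\phi$ is symplectic, hence self-dual with $\det(\phi)=1$, so that point \ref{equation epsilon times epsilon dual} gives $\e(1/2,\phi,\psi)^2=\det(\phi)(-1)=1$. Thus the above expression collapses to $(-1)^{a(\phi)}$. Independently, since $\eta_{E/F}$ is unramified and $-1\in O_F^\times$ (here using that the residual characteristic of $F$ is different from $2$), we have $\eta_{E/F}(-1)=1$, so the sign $\eta_{E/F}(-1)^{n}$ appearing in Theorem \ref{thm PTB} equals $1$.

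Putting everything together, Theorem \ref{thm PTB} reads: $(-1)^{a(\phi)}=1$ if $\LLC(\phi)$ is $\1^{E/F}$-distinguished, and $(-1)^{a(\phi)}=-1$ otherwise. This is exactly the statement that $a(\phi)$ is even iff $\LLC(\phi)$ is $\1^{E/F}$-distinguished. There is no real obstacle here: the proof is a one-line manipulation of epsilon factors once the right $\psi$ is chosen and the unramifiedness of $\eta_{E/F}$ is exploited; the content lies entirely in the input Theorem \ref{thm PTB}.
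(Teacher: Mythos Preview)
Your argument is correct and is exactly the computation the paper has in mind: it refers to ``the lines of the proof of Corollary \ref{cor serre}'' together with $\eta_{E/F}(-1)=1$, which amounts precisely to your manipulation $\e(1/2,\phi\otimes\Ind_{W'_E}^{W'_F}(\1),\psi)=(-1)^{a(\phi)}\e(1/2,\phi,\psi)^2=(-1)^{a(\phi)}$ for $\psi$ of conductor zero. (The parenthetical on odd residual characteristic is unnecessary for the step $-1\in O_F^\times$, which holds in any case.)
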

\begin{proof}
It easily follows, along the lines of the proof of Corollary \ref{cor serre}, from Theorem \ref{thm PTB}, noting that $\eta_{E/F}(-1)=1$.
\end{proof}

\begin{rem}
A general symplectic representation $\phi$ of $W'_F$ being a direct sum of the form 
$\oplus_{i=1}^r\phi_i \oplus_{j=1}^s(\phi'_j\oplus {\phi'_j}^\vee)$ for $\phi_i$ irreducible symplectic and $\phi'_j$ irreducible, we deduce the parity of $a(\phi)$ from Corollary \ref{cor PTB} and such a decomposition. Namely, by Corollary \ref{cor serre} $a(\phi'_j\oplus {\phi'}_j^\vee)\equiv 0[2]$. Hence setting $\epsilon_i\in \{\pm 1\}$ being equal to $1$ if and only if $\LLC(\phi_i)$ is $\1^{E/F}$-distinguished, we deduce by additivity of the Artin conductor that 
$(-1)^{a(\phi)}=\prod_{i=1}^r \epsilon_i$. 
\end{rem}

\begin{rem}
Looking at it from another angle, one sees that a symplectic discrete series representation of $\GL_{2n}(F)$ is $\1^{E/F}$-distinguished ($E/F$ unramified) if and only if it has even conductor.
\end{rem}

\bibliographystyle{alpha}
\bibliography{ptb}

\end{document}